\numberwithin{equation}{section}
\newtheorem{theorem}{Theorem}[section]
\newtheorem{proposition}[theorem]{Proposition}
\newtheorem{lemma}[theorem]{Lemma}
\newtheorem{corollary}[theorem]{Corollary}
\theoremstyle{definition}
\begin{document}

\baselineskip=15pt

\title[On Higgs bundles twisted by a vector bundle]{Remarks on Higgs bundles twisted by a vector bundle}

\author[D. Alfaya]{David Alfaya}

\address{Department of Applied Mathematics and Institute for Research in Technology, ICAI School of Engineering,
Comillas Pontifical University, C/Alberto Aguilera 25, 28015 Madrid, Spain}

\email{dalfaya@comillas.edu}

\author[I. Biswas]{Indranil Biswas}

\address{Department of Mathematics, Shiv Nadar University, NH91, Tehsil Dadri,
Greater Noida, Uttar Pradesh 201314, India}

\email{indranil.biswas@snu.edu.in, indranil29@gmail.com}

\author[P. Kumar]{Pradip Kumar}

\address{Department of Mathematics, Shiv Nadar University, NH91, Tehsil Dadri,
Greater Noida, Uttar Pradesh 201314, India}

\email{Pradip.Kumar@snu.edu.in}

\subjclass[2010]{14H60, 14D23}

\keywords{Twisted Higgs bundle, Hecke transformation, spectral curve}

\date{}

\begin{abstract}
For any $V$--twisted Higgs bundle on a compact Riemann surface $X$, where $V$ is a holomorphic vector bundle
of rank two on $X$, there are two associated Higgs bundles on $X$, twisted by line bundles, which are
constructed using a Hecke transformation
on $V$. We characterize all such pairs of Higgs bundles (twisted by line bundles) given by $V$--twisted Higgs
bundles. Using this characterization, we provide a spectral correspondence for the moduli space, identifying
$V$--twisted Higgs bundles with the direct images of certain rank one torsionfree Higgs sheaves twisted
by a line bundle on a spectral covering of the curve $X$.
\end{abstract}

\maketitle

\section{Introduction}

This work was inspired by \cite{GGN}. In \cite{GGN} the notion of $V$--twisted Higgs bundles on a compact
Riemann surface $X$, where $V$ is a holomorphic vector bundle on $X$ of rank two, was introduced. Their
moduli was constructed in \cite{Si} and \cite{GGN} and some properties of the moduli space were investigated there.

Our aim here is to point out that using a Hecke transformation on $V$, there is a natural bijective
correspondence between the $V$--twisted Higgs bundles on $X$ and a certain class of pairs of
$\mathcal S$--twisted Higgs bundles and $\mathcal L$--twisted Higgs bundles on $X$. Here $\mathcal S$
and $\mathcal L$ are fixed holomorphic line bundles on $X$ such that ${\mathcal S}\oplus {\mathcal L}$ is
obtained by a forward Hecke transformation of $V$. (See Theorem \ref{thm1} and Corollary
\ref{cor2}.)

{}From this correspondence an analogue of the Beauville-Narasimhan-Ramanan spectral correspondence for 
$V$--twisted Higgs bundles is obtained (see Theorem \ref{thm2}). Given a $V$--twisted Higgs bundle
consider the corresponding $\mathcal{S}$--twisted Higgs bundle. This $\mathcal{S}$--twisted Higgs
field gives a spectral curve $\mathcal{S} \,\supset\, Y\, \stackrel{\phi}{\longrightarrow}\, X$.
The class of $V$--twisted Higgs bundles such that the spectral curve (for the $\mathcal{S}$--twisted Higgs
field) is integral, is identified 
with the direct image of a $\varphi^*\mathcal L$--twisted torsionfree Higgs-sheaf of rank 1 on the
corresponding spectral curve.

\section*{Acknowledgements}

D.A. was supported by grants PID2022-142024NB-I00 and RED2022-134463-T funded by 
MCIN/AEI/10.13039/501100011033. I.B. is partially supported by a J. C. Bose Fellowship (JBR/2023/000003).

\section{Decomposition of Higgs fields}
\label{section:decomposition}

Let $X$ be a compact connected Riemann surface. Fix a holomorphic vector bundle $V$ on $X$ of rank two. We 
will start by showing that we can always construct two holomorphic line bundles $\mathcal{S}$ and $\mathcal{L}$ on $X$ 
and a torsion sheaf $\mathbb{T}$ supported on a finite set of distinct points of $X$ such that $V$ fits in 
a short exact sequence, of ${\mathcal O}_X$--coherent sheaves, of the form
\begin{equation}\label{e5intro}
0\, \longrightarrow\, V \,{\longrightarrow}\, \mathcal{S}\oplus \mathcal{L}\,
{\longrightarrow}\,{\mathbb T} \, \longrightarrow\, 0;
\end{equation}
in other words, $V$ is obtained from $\mathcal{S}\oplus \mathcal{L}$ by performing a Hecke transformation.
To do so, first choose a holomorphic line subbundle
\begin{equation}\label{e0}
S\ \subset\ V^*.
\end{equation}
To construct such a line subbundle explicitly, note that the Riemann--Roch theorem says that
the vector bundle $V^*\otimes L'$ has a nonzero holomorphic section for any holomorphic line
bundle $L'$ with $2\cdot\text{degree}(L')\, \geq \, 2\cdot \text{genus}(X)+ \text{degree}(V)-1$. Any
nonzero section $s$ of $V^*\otimes L'$ generates a line subbundle $L^s$ of $V^*\otimes L'$; in
other words, $L^s$ is the inverse image, in $V^*\otimes L'$, of the torsion part of $(V^*\otimes L')/
{\rm image}(s)$ under the quotient map $V^*\otimes L'\, \longrightarrow\,(V^*\otimes L')/
{\rm image}(s)$. So the line bundle $S\,=\,L^s\otimes (L')^*$ is a line subbundle of $V^*\otimes L'
\otimes (L')^*\,=\,V^*$. Consider the short exact sequence
\begin{equation}\label{e1}
0\, \longrightarrow\, S \,\longrightarrow\, V^*\, \stackrel{q}{\longrightarrow}\, V^*/S \, \longrightarrow\, 0,
\end{equation}
where $S$ is the line subbundle in \eqref{e0}.
Take a reduced effective divisor ${\mathbb D}\,=\, \sum_{i=1}^d x_i$, where $x_1,\,\cdots ,\, x_d$ are distinct
points of $X$. Thus, $(V^*/S)\otimes {\mathcal O}_X(-\mathbb{D})$ is a subsheaf of $V^*/S$. From \eqref{e1}
we have the short exact sequence
\begin{equation}\label{e2}
0\, \longrightarrow\, S \,\longrightarrow\, W\, :=\, q^{-1}((V^*/S)\otimes {\mathcal O}_X(-\mathbb{D}))\,
\stackrel{q'}{\longrightarrow}\, (V^*/S)\otimes {\mathcal O}_X(-\mathbb{D}) \, \longrightarrow\, 0,
\end{equation}
where $q$ is the quotient map in \eqref{e1} and $q'$ in \eqref{e2} is the restriction of $q$. If we choose $\mathbb{D}$ such that
\begin{equation}\label{e2b}
d \ > \ \text{degree}(V^*/S)-\text{degree}(S) +2(\text{genus}(X)-1),
\end{equation}
then $\text{degree}(S\otimes ((V^*/S)\otimes {\mathcal O}_X(-\mathbb{D}))^*) \, >\, 2(\text{genus}(X)-1)$, so
$H^1(X,\, S\otimes ((V^*/S)\otimes {\mathcal O}_X(-\mathbb{D}))^*)\,=\, 0$; indeed,
for any holomorphic line bundle $L'$ on $X$ with $\text{degree}(L')\, >\,2(\text{genus}(X)-1)$,
by Serre duality $H^1(X,\, L')\,=\, H^0(X,\, (L')^*\otimes K_X)^*\,=\, 0$. Hence \eqref{e2} splits holomorphically if \eqref{e2b} holds.
Assume that $\mathbb{D}$ is chosen so that \eqref{e2b} holds. Fix a holomorphic splitting of \eqref{e2}. This implies
that we have a decomposition
\begin{equation}\label{e3}
W\, :=\, q^{-1}((V^*/S)\otimes {\mathcal O}_X(-\mathbb{D}))\, =\,
S\oplus ((V^*/S)\otimes {\mathcal O}_X(-\mathbb{D})).
\end{equation}
Let $L\, \subset\, V^*$ be the line subbundle generated by the subsheaf $(V^*/S)\otimes
{\mathcal O}_X(-\mathbb{D})\, \subset\, W \, \subset\, V^*$ (see \eqref{e3}). In other
words, $L$ is the inverse image of the torsion part of $V^*/((V^*/S)\otimes {\mathcal O}_X(-\mathbb{D}))$
under the quotient map $V^*\, \longrightarrow\, V^*/((V^*/S)\otimes {\mathcal O}_X(-\mathbb{D}))$.
Consequently, we have a short exact sequence
\begin{equation}\label{e4}
0\, \longrightarrow\, S\oplus L \,\stackrel{\Phi^*}{\longrightarrow}\, V^*\,
\longrightarrow\,{\mathbb T}' \, \longrightarrow\, 0,
\end{equation}
where ${\mathbb T}'$ is a torsion sheaf whose support is contained in the reduced divisor $\mathbb{D}$, and
$\Phi^*$ is the natural inclusion map (the notation $\Phi^*$ is being used because its adjoint will
be used more often). Let $D$ denote the support of $\mathbb{T}'$. For each point $y\in D$,
the dimension of ${\mathbb T}'_y\,=\,
{\mathbb T}'/(\mathbb{T}'\otimes {\mathcal O}_X(-y))$ is $1$. Taking the dual, from \eqref{e4} we
have a short exact sequence
\begin{equation}\label{e5}
0\, \longrightarrow\, V \,\stackrel{\Phi}{\longrightarrow}\, S^*\oplus L^*\,
\stackrel{\xi}{\longrightarrow}\,{\mathbb T} \, \longrightarrow\, 0,
\end{equation}
where $\Phi\,:=\, (\Phi^*)^*$ is the dual of $\Phi^*$; so ${\mathbb T}$ is a torsion sheaf whose support coincides
with the support of ${\mathbb T}'$. In
fact, ${\mathbb T}$ and ${\mathbb T}'$ are isomorphic, but there is no natural isomorphism between them.

Taking $\mathcal{S}$ and $\mathcal{L}$ as $S^*$ and $L^*$ respectively, \eqref{e5} becomes the sought exact sequence
\begin{equation}\label{e6}
0\, \longrightarrow\, V \,\stackrel{\Phi}{\longrightarrow}\, {\mathcal S}\oplus {\mathcal L}\,
\stackrel{\xi}{\longrightarrow}\,{\mathbb T} \, \longrightarrow\, 0.
\end{equation}

{}From the construction of \eqref{e5} it follows immediately that for any point $x$ in the
support of the torsion sheaf $\mathbb T$, the image of the homomorphism
$$
\Phi_x\, :\, V_x \, \longrightarrow\, ({\mathcal S}\oplus {\mathcal L})_x\,=\,
{\mathcal S}_x\oplus {\mathcal L}_x
$$
has dimension $1$ and the restrictions of the map $\xi$ to the fibers $\mathcal{L}_x$ and ${\mathcal S}_x$
are both injective. Thus, the following composition of homomorphisms is an isomorphism:
\begin{equation}
\label{e6b1}
\Phi_x(V_x) \, \hookrightarrow\, {\mathcal S}_x\oplus {\mathcal L}_x\,\longrightarrow\, 
{\mathcal S}_x,
\end{equation}
where ${\mathcal S}_x\oplus {\mathcal L}_x\,\longrightarrow\, {\mathcal S}_x$ is the natural projection.
Denote the isomorphism in \eqref{e6b1} by $\rho^x_1\, :\, \Phi_x(V_x) \, \longrightarrow\, {\mathcal
S}_x$. Analogously, let $\rho^x_2\, :\, \Phi_x(V_x) \, \longrightarrow\, {\mathcal L}_x$ denote the
composition of homomorphisms
\begin{equation}
\label{e6b2}
\Phi_x(V_x) \, \hookrightarrow\, {\mathcal S}_x\oplus
{\mathcal L}_x\,\longrightarrow\, {\mathcal L}_x,
\end{equation}
where ${\mathcal S}_x\oplus {\mathcal L}_x
\,\longrightarrow\, {\mathcal L}_x$ is the other natural projection. Then, we have a natural morphism
\begin{equation}\label{e6c}
\rho^x\ :=\, \rho^x_2\circ (\rho^x_1)^{-1}\ :\ {\mathcal S}_x \ \longrightarrow\ {\mathcal L}_x.
\end{equation}

Let us also consider the homomorphisms
\begin{equation}\label{e6d}
\widehat{\rho}_1\, :=\, p_1\circ \Phi\, :\, V\, \longrightarrow\, {\mathcal S}\ \ \text{ and }
\ \ \widehat{\rho}_2\,:=\, p_2\circ\Phi :\, V\, \longrightarrow\, {\mathcal L},
\end{equation}
where $p_1$ and $p_2$ are the projections of ${\mathcal S}\oplus {\mathcal L}$ to ${\mathcal S}$ and
${\mathcal L}$ respectively, and $\Phi$ is the homomorphism in \eqref{e6}. Note that
$(\widehat{\rho}_1)_x\,=\, \rho^x_1 \circ\Phi_x$ and $(\widehat{\rho}_2)_x\,=\,\rho^x_2\circ\Phi_x$, where
$\rho^x_1$ and $\rho^x_2$ are the compositions in \eqref{e6b1} and \eqref{e6b2} respectively.

Take a holomorphic vector bundle $E$ on $X$. A $V$--\textit{twisted Higgs field}
on $E$ is a holomorphic section
$$
\theta \ \in\ H^0(X,\, \text{End}(E)\otimes V)
$$
such that $\theta\bigwedge\theta\, =\, 0$ (see \cite[Definition 2.1]{GGN}); note that
$\theta\bigwedge\theta \, \in\, H^0(X,\, \text{End}(E)\otimes\bigwedge^2 V)$.
Using the homomorphisms in \eqref{e6d}, a $V$--twisted Higgs field $\theta$ on $E$ gives sections
\begin{equation}\label{e7}
\theta_1\,:=\, ({\rm Id}_E\otimes\widehat{\rho}_1)(\theta) \, \in\, H^0(X,\, \text{End}(E)\otimes
{\mathcal S}), \ \ \ \theta_2\,:=\, ({\rm Id}_E\otimes\widehat{\rho}_2)(\theta) \, \in\,
H^0(X,\, \text{End}(E)\otimes {\mathcal L}),
\end{equation}
where $\widehat{\rho}_1$ and $\widehat{\rho}_2$ are constructed in \eqref{e6d};
here $\theta$ is considered as a homomorphism from $E$ to $E\otimes V$. The given condition that
$\theta\bigwedge\theta\,=\, 0$ is equivalent to the following condition:
\begin{equation}\label{e7a}
(\theta_2\otimes {\rm Id}_{\mathcal S})\circ\theta_1\ =\
(\theta_1\otimes {\rm Id}_{\mathcal L})\circ\theta_2
\end{equation}
as homomorphisms from $E$ to $E\otimes\mathcal{S}\otimes\mathcal{L}$; here $\theta_1$ and $\theta_2$ are
considered as homomorphisms from $E$ to $E\otimes{\mathcal S}$ and $E\otimes{\mathcal L}$ respectively
(also $E\otimes\mathcal{S}\otimes\mathcal{L}$ is identified with $E\otimes\mathcal{L}\otimes\mathcal{S}$).

Take another $V$--twisted Higgs field $\theta'$ on $E$. Let $\theta'_1$ (respectively, $\theta'_2$)
be the $\mathcal S$--twisted (respectively, $\mathcal L$--twisted) Higgs field on $E$ given by
$\theta'$. If $\theta\, \not=\, \theta'$, then either $\theta_1\, \not=\, \theta'_1$ or
$\theta_2\, \not=\, \theta'_2$ or both hold. In other words, the above map
$\theta \ \longmapsto\ (\theta_1,\, \theta_2)$ is injective.

We will characterize all pairs $(\theta_1,\, \theta_2)$, where $\theta_1$ is a
$\mathcal S$--twisted Higgs field on $E$ and $\theta_2$ is a $\mathcal L$--twisted Higgs field on $E$,
that arise from $V$--twisted Higgs fields on $E$.

\section{Twisted Higgs bundles and spectral data}

\subsection{Decomposition of a $V$--twisted Higgs field}

Let
\begin{equation}\label{e8}
D\ =\ \sum_{i=1}^\ell x_i
\end{equation}
be the support of the torsion sheaf $\mathbb T$ in \eqref{e6}; recall that $D\,\leq\,{\mathbb D}$
(see \eqref{e2} for $\mathbb D$).

Take a holomorphic vector bundle $E$ on $X$. Let $\theta\, \in\, 
H^0(X,\, \text{End}(E)\otimes V)$ be a $V$--twisted Higgs field on $E$.
It gives the $\mathcal S$--twisted Higgs bundle $(E,\, \theta_1)$ and
the $\mathcal L$--twisted Higgs bundle $(E,\, \theta_2)$ (see \eqref{e7}).
Corresponding to $(E,\, \theta_1)$ we have a spectral curve
\begin{equation}\label{e9}
Y\ \subset\ {\mathcal S}
\end{equation}
and a torsionfree coherent sheaf
\begin{equation}\label{e10}
F \ \longrightarrow\ Y
\end{equation}
of rank one \cite{BNR}, \cite{Hi}. Let
\begin{equation}\label{e11}
\varphi\ :\ Y \ \longrightarrow\ X
\end{equation}
be the restriction of the natural projection ${\mathcal S}\, \longrightarrow\, X$.

We recall that $Y$ parametrizes the generalized eigenvalues of $\theta_1$, and $F$
is given by the generalized eigenspaces for $\theta_1$ \cite{BNR}, \cite{Hi}. In particular,
we have
\begin{equation}\label{e12}
F \ \subset\ \varphi^*E,
\end{equation}
and $\varphi^*\theta_1$ sends the subsheaf $F$ in \eqref{e12} to $F\otimes 
(\varphi^* {\mathcal S})\, \subset\, (\varphi^*E) \otimes (\varphi^* {\mathcal S})$.

\begin{lemma}\label{lem1}
\mbox{}
\begin{enumerate}
\item The homomorphism $\varphi^*\theta_2 \, :\, \varphi^*E\, \longrightarrow\, \varphi^*(E\otimes {\mathcal L})
\,=\, (\varphi^* E)\otimes (\varphi^* {\mathcal L})$, where $\theta_2$ is the $\mathcal L$--twisted
Higgs field on $E$ in \eqref{e7}, sends the subsheaf $F$ in \eqref{e12} to $F\otimes 
(\varphi^* {\mathcal L})\, \subset\, (\varphi^*E) \otimes (\varphi^* {\mathcal L})$.

\item The two homomorphisms $(\varphi^*\theta_1)\big\vert_F \, :\, F\, \longrightarrow\,
F\otimes (\varphi^* {\mathcal S})$ and $(\varphi^*\theta_2)\big\vert_F \, :\, F\, \longrightarrow\,
F\otimes (\varphi^* {\mathcal L})$ commute. In other words,
$$
(\varphi^*(\theta_2\otimes{\rm Id}_{\varphi^*\mathcal S})\big\vert_{F\otimes\varphi^*{\mathcal S}}
)\circ (\varphi^*\theta_1)\big\vert_F
\ =\ (\varphi^*(\theta_1\otimes {\rm Id}_{\varphi^*\mathcal L})\big\vert_{F\otimes\varphi^*{\mathcal L}})
\circ (\varphi^*\theta_2)\big\vert_F
$$
as homomorphisms from $F$ to $F\otimes \varphi^* ({\mathcal S}\otimes{\mathcal L})$.
\end{enumerate}
\end{lemma}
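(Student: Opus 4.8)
The plan is to reduce both parts to the commutation identity \eqref{e7a}, which holds downstairs on $X$, and then observe that everything is compatible with pullback under $\varphi$.

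First I would recall the key structural fact about the spectral construction: the sheaf $F \subset \varphi^* E$ of \eqref{e12} is precisely the subsheaf on which $\varphi^*\theta_1$ acts by the ``tautological eigenvalue''. More precisely, if $\lambda \in H^0(Y,\, \varphi^*\mathcal{S})$ denotes the tautological section of $\varphi^*\mathcal{S}$ coming from the embedding $Y \subset \mathcal{S}$, then $(\varphi^*\theta_1)\vert_F = \lambda \cdot \mathrm{Id}_F$; this is exactly what it means for $F$ to be the sheaf of generalized eigenspaces, and it already gives the stated property of $\varphi^*\theta_1$ in \eqref{e12}. The main point of the lemma is that $\varphi^*\theta_2$ also preserves $F$, even though $F$ was defined using only $\theta_1$.

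For part (1), the idea is that since $F$ is the eigensheaf of $\varphi^*\theta_1$ for the eigenvalue $\lambda$, it suffices to show that $\varphi^*\theta_2$ maps $F$ into some $\varphi^*\theta_1$--eigensheaf, and then that the eigenvalue must again be $\lambda$. Pulling back \eqref{e7a} by $\varphi$ gives the identity
\[
(\varphi^*\theta_2 \otimes \mathrm{Id})\circ(\varphi^*\theta_1) \ =\ (\varphi^*\theta_1 \otimes \mathrm{Id})\circ(\varphi^*\theta_2)
\]
as homomorphisms $\varphi^*E \to (\varphi^*E)\otimes \varphi^*(\mathcal{S}\otimes\mathcal{L})$. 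Restricting to $F$ and using $(\varphi^*\theta_1)\vert_F = \lambda$, the left side becomes $\lambda \cdot (\varphi^*\theta_2\vert_F)$, so for any local section $f$ of $F$ the element $(\varphi^*\theta_2)(f)$ is again a $\lambda$--eigenvector of $\varphi^*\theta_1$ (tensored appropriately by $\varphi^*\mathcal{L}$). Since $F$ is exactly the generalized $\lambda$--eigensheaf of $\varphi^*\theta_1$ inside $\varphi^*E$, this forces $(\varphi^*\theta_2)(f) \in F \otimes \varphi^*\mathcal{L}$, which is the assertion of part (1). One must argue this at the level of the generalized (not just ordinary) eigensheaf, so that the conclusion holds over all of $Y$ including the ramification locus where eigenvalues collide.

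Once part (1) is established, part (2) is essentially immediate: the restrictions $(\varphi^*\theta_1)\vert_F$ and $(\varphi^*\theta_2)\vert_F$ are now both well-defined endomorphism-valued sections on $F$, and their commutation is just the restriction to $F$ of the pulled-back identity \eqref{e7a} that was used above. I would state it simply as: restrict the displayed pullback identity to the subsheaf $F \subset \varphi^*E$, noting that by part (1) both sides land in $F \otimes \varphi^*(\mathcal{S}\otimes\mathcal{L})$, so the equality of the two compositions in the lemma is the equality inherited from \eqref{e7a}. The main obstacle I anticipate is the careful bookkeeping in part (1): making precise the ``generalized eigensheaf'' characterization of $F$ from \cite{BNR}, \cite{Hi} and ensuring the eigenvalue-matching argument is valid scheme-theoretically over the entire spectral curve $Y$, rather than only generically where the eigenvalues of $\theta_1$ are distinct.
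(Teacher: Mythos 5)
Your plan rests on the same pillar as the paper's proof: both reduce everything to the commutation identity \eqref{e7a}, pulled back along $\varphi$. The difference is the mechanism by which $F$ is seen to be preserved. The paper argues fiberwise, invoking the linear-algebra fact that if $AB=BA$ then $B$ preserves every generalized eigenspace of $A$ (via the nilpotency argument: $(A-\lambda)^N v = 0$ implies $(A-\lambda)^N Bv = B(A-\lambda)^N v = 0$). You instead work on $Y$ and use the identity $(\varphi^*\theta_1)\vert_F = \lambda\cdot{\rm Id}_F$, from which part (1) is a two-line kernel computation: $\bigl((\varphi^*\theta_1-\lambda)\otimes{\rm Id}\bigr)\bigl((\varphi^*\theta_2)(f)\bigr) = (\varphi^*\theta_2\otimes{\rm Id})\bigl((\varphi^*\theta_1-\lambda)(f)\bigr) = 0$. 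Be aware, though, that your identity is valid only under the scheme-theoretic reading of $F$, namely $F=\ker\bigl(\varphi^*\theta_1-\lambda\,{\rm Id}_{\varphi^*E}\bigr)$ on the full, possibly non-reduced, curve $Y$; there the failure of $\theta_1$ to be semisimple is absorbed into the nilpotent functions on $Y$, the identity is a tautology, and the ``generalized eigensheaf bookkeeping'' you defer at the end simply evaporates. Under the fiberwise reading that the paper actually uses (the fiber $F_y\subset E_x$ is the generalized eigenspace of $\theta_1(x)$), the identity is false wherever $\theta_1(x)$ is not semisimple --- for instance, for nilpotent $\theta_1$ one has $\lambda\vert_{Y_{\rm red}}=0$ while $\theta_1\neq 0$ on $F$ --- so your sentence claiming it is ``exactly what it means for $F$ to be the sheaf of generalized eigenspaces'' conflates the eigensheaf with the generalized eigensheaf; in that reading the step you flag as the ``main obstacle'' is resolved precisely by the paper's matrix fact quoted above. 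Either way the gap you leave open is closed by a standard one-line argument, and part (2) then follows, as both you and the paper note, by restricting the pulled-back \eqref{e7a} to $F$, using part (1) and the injectivity of $F\otimes\varphi^*(\mathcal{S}\otimes\mathcal{L})\hookrightarrow(\varphi^*E)\otimes\varphi^*(\mathcal{S}\otimes\mathcal{L})$.
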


\begin{proof}
In view of \eqref{e7a}, the first statement follows immediately from the fact that if $A$ and $B$ are two
$r\times r$ matrices with complex entries with $AB\,=\, BA$, and $\lambda$ is a generalized eigenvalue of
$A$, then $B$ preserves the generalized eigenspace for the generalized eigenvalue $\lambda$ of $A$.

The second statement also follows from \eqref{e7a}.
\end{proof}

Take any point $x$ in $D$, which is the support of $\mathbb T$ (see \eqref{e8}). Next take any
connected component $y\, \in\, \varphi^{-1}(x)$ of the fiber, over $x$, of the
map $\varphi$ in \eqref{e11}. Note that the
reduced subscheme $y_{\rm red}$ is a single point, but $y$ need not be reduced.
From Lemma \ref{lem1}(1) we know that the homomorphism
$$
(\varphi^*\theta_2)_y \ :\ E_x \,=\, (\varphi^*E)_y \ \longrightarrow\ 
\varphi^*(E\otimes {\mathcal L})_y \,=\, E_x\otimes {\mathcal L}_x
$$
takes $F_y$ in \eqref{e12} to $F_y\otimes (\varphi^* {\mathcal L})_y \,=\, F_y\otimes{\mathcal L}_x$.
In other words, $(\theta_2)(x)$ produces a homomorphism
\begin{equation}\label{e13}
\theta_{2,y} \ :\ F_y \ \longrightarrow\ F_y\otimes{\mathcal L}_x.
\end{equation}

\begin{proposition}\label{prop1}
The homomorphism $\theta_{2,y}$ in \eqref{e13} has exactly one generalized eigenvalue. This
unique eigenvalue of $\theta_{2,y}$ is $-\rho^x (y_{\rm red})$, where $\rho^x\, : \, \mathcal{S}_x \longrightarrow \mathcal{L}_x$ is the homomorphism
in \eqref{e6c}. (Note that $y_{\rm red}$ is a point of the fiber of $\mathcal S$ over $x$ and
it lies on the spectral curve $Y$.)
\end{proposition}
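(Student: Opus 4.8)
The plan is to reduce the assertion to linear algebra in the single fibre over $x$, exploiting that the Hecke modification \eqref{e6} makes $\Phi$ drop to rank one precisely over the points of $D$. Fix $x\in D$ and the component $y\in\varphi^{-1}(x)$. First I would record the facts already assembled in Section \ref{section:decomposition}: the image $\Phi_x(V_x)\subset\mathcal{S}_x\oplus\mathcal{L}_x$ is one--dimensional, the two projections restrict to isomorphisms on it, and by \eqref{e6c} the map $\rho^x\colon\mathcal{S}_x\to\mathcal{L}_x$ is exactly the linear map whose graph is the line $\Phi_x(V_x)$; moreover $(\widehat{\rho}_1)_x=\rho^x_1\circ\Phi_x$ and $(\widehat{\rho}_2)_x=\rho^x_2\circ\Phi_x$.

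The crux is the observation that at $x$ the two twisted Higgs fields are governed by one and the same endomorphism of $E_x$. Since $\theta(x)\in\text{End}(E_x)\otimes V_x$ and $\Phi_x$ has one--dimensional image spanned by a vector $v_0=(s_0,\ell_0)$ with $\ell_0=\rho^x(s_0)$, the element $({\rm Id}_{E_x}\otimes\Phi_x)(\theta(x))$ lies in $\text{End}(E_x)\otimes\Phi_x(V_x)$, hence equals $\psi\otimes v_0$ for a unique $\psi\in\text{End}(E_x)$. Applying the two projections and the identities $(\widehat{\rho}_i)_x=\rho^x_i\circ\Phi_x$ to \eqref{e7} then yields $\theta_1(x)=\psi\otimes s_0$ and $\theta_2(x)=\psi\otimes\ell_0$. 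In particular $\theta_2(x)=({\rm Id}_{E_x}\otimes\rho^x)(\theta_1(x))$, so the generalized eigenvalues of $\theta_2(x)$ are obtained from those of $\theta_1(x)$ by applying $\rho^x$.

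With this in hand the proposition follows. By the Beauville--Narasimhan--Ramanan description recalled before \eqref{e12} (see \cite{BNR}, \cite{Hi}), the fibre $F_y$ is the generalized eigenspace of $\theta_1(x)$ attached to the point $y_{\rm red}\in\mathcal{S}_x$; as $\theta_1(x)=\psi\otimes s_0$ is a scalar multiple of $\psi$, this is precisely the generalized $\mu$--eigenspace of $\psi$, where $\mu\, s_0$ is the eigenvalue of $\theta_1(x)$ cut out by $y$. Restricting $\theta_2(x)=\psi\otimes\ell_0$ to $F_y$ gives $\theta_{2,y}=(\psi\vert_{F_y})\otimes\ell_0$, and $\psi\vert_{F_y}$ has the single generalized eigenvalue $\mu$; hence $\theta_{2,y}$ has exactly one generalized eigenvalue, equal to $\mu\,\ell_0=\rho^x(\mu\, s_0)$, i.e.\ to $\rho^x$ applied to the $\theta_1$--eigenvalue governing $F_y$. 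Identifying this eigenvalue with $y_{\rm red}$ through the spectral--curve convention and using linearity of $\rho^x$ then delivers the stated value $-\rho^x(y_{\rm red})$.

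The step I expect to be the main obstacle is pinning down this last sign. The rest of the argument is one--point linear algebra: well--definedness of $\psi$ from the one--dimensionality of $\Phi_x(V_x)$, the fact that passage to generalized eigenspaces is unaffected by tensoring with a nonzero vector of $\mathcal{S}_x$, and independence of the fibrewise statements from the local trivializations of $\mathcal{S}$ and $\mathcal{L}$ used to talk about eigenvalues. The sign, by contrast, is genuine bookkeeping: it is determined by the orientation built into \eqref{e6c} together with the dualization \eqref{e4}--\eqref{e5} and the convention fixing the tautological eigenvalue $y_{\rm red}$ of $\theta_1(x)$ on the spectral curve, and it is exactly this comparison that produces the minus sign in the statement.
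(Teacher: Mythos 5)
Your fibrewise linear algebra is correct, and it is essentially the paper's own argument read in a single fibre: the paper gets the same information by pushing \eqref{e6} through ${\rm End}(E)$ and global sections (the sequence \eqref{e14}), obtaining $\Psi_*(\theta_1\oplus\theta_2)=0$ in \eqref{e15}, which at $x$ says precisely that $({\rm Id}_{E_x}\otimes\Phi_x)(\theta(x))$ lies in ${\rm End}(E_x)\otimes\Phi_x(V_x)$ --- your $\psi\otimes v_0$. Your identities $\theta_1(x)=\psi\otimes s_0$, $\theta_2(x)=\psi\otimes\ell_0$ with $\ell_0=\rho^x(s_0)$, and the conclusion that $\theta_{2,y}$ has exactly one generalized eigenvalue, are all sound. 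The genuine gap is the last step: you compute that eigenvalue to be $\mu\ell_0=\rho^x(y_{\rm red})$ and then assert, with no argument, that ``the spectral--curve convention'' and ``bookkeeping'' convert this into $-\rho^x(y_{\rm red})$. That sign \emph{is} the nontrivial content of the second sentence of the proposition; deferring it to unspecified conventions leaves the claim unproven, and you yourself flag it as unresolved.

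Worse, the gap cannot be closed the way you propose. Under the conventions the paper actually fixes --- $F_y$ is the generalized eigenspace of $\theta_1(x)$ for the eigenvalue $y_{\rm red}$ (the spectral curve in Section \ref{subsection:spectral} is cut out by $\det(t-\theta_1)=0$), and $\rho^x$ in \eqref{e6c} is the map whose graph is $\Phi_x(V_x)$ --- the bookkeeping yields a \emph{plus} sign, agreeing with your computation. Carrying out the paper's own route makes this visible: Corollary \ref{cor1} gives $({\rm Id}_{F_y}\otimes\xi_{1,x})\circ\theta_{1,y}+({\rm Id}_{F_y}\otimes\xi_{2,x})\circ\theta_{2,y}=0$, while $\ker\xi_x=\Phi_x(V_x)=\mathrm{graph}(\rho^x)$ forces $\xi_{1,x}=-\,\xi_{2,x}\circ\rho^x$; substituting and using the injectivity of $\xi_{2,x}$ (noted after \eqref{e6}) gives $\theta_{2,y}=({\rm Id}_{F_y}\otimes\rho^x)\circ\theta_{1,y}$, whose unique eigenvalue is $+\rho^x(y_{\rm red})$: the two candidate sources of a minus sign cancel. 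A concrete check: on $X=\mathbb{P}^1$ take $\mathcal{S}=\mathcal{L}=\mathcal{O}_X$, let $\xi(f,g)=f(x_0)+g(x_0)$ map onto the length-one torsion sheaf at a point $x_0$, set $V=\ker\xi$, $E=\mathcal{O}_X$, $\theta=(1,-1)\in H^0(X,V)$; then $\theta_1=1$, $\theta_2=-1$, $y_{\rm red}=1$, $\rho^{x_0}=-1$, so the eigenvalue of $\theta_{2,y}$ is $-1=\rho^{x_0}(y_{\rm red})$, not $-\rho^{x_0}(y_{\rm red})=+1$. So a complete proof must either exhibit a sign-reversing convention that the paper does not contain, or conclude that the stated minus sign is incompatible with \eqref{e6c} and the eigenspace convention; your proposal does neither, and as written it does not prove the statement as given.
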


\begin{proof}
Tensoring the exact sequence in \eqref{e6} with $\text{End}(E)$ we obtain the short
exact sequence
$$
0\, \longrightarrow\, \text{End}(E)\otimes V \,\xrightarrow{\,\,\,{\rm Id}_{\text{End}(E)}
\otimes\Phi\,\,\,}\,\text{End}(E)\otimes ({\mathcal S}\oplus {\mathcal L})\,
\stackrel{\Psi}{\longrightarrow}\,\text{End}(E)\otimes {\mathbb T} \, \longrightarrow\, 0.
$$
This gives an exact sequence
\[
0\ \longrightarrow\
H^0(X,\, \text{End}(E)\otimes V)\ \xrightarrow{\,\,\, ({\rm Id}_{\text{End}(E)}\otimes\Phi)_* \,\,\,}
\ H^0(X,\, \text{End}(E)\otimes ({\mathcal S}\oplus {\mathcal L}))
\]
\begin{equation}\label{e14}
\xrightarrow{\,\,\, \Psi_*\,\,\,} \ H^0(X,\, \text{End}(E)\otimes {\mathbb T}),
\end{equation}
where the homomorphisms $({\rm Id}_{\text{End}(E)}\otimes\Phi)_*$ and $\Psi_*$ are induced by
${\rm Id}_{\text{End}(E)}\otimes\Phi$ and $\Psi$ respectively.
Consider $\theta_1\oplus \theta_2\, \in\, H^0(X,\, \text{End}(E)\otimes ({\mathcal S}\oplus {\mathcal L}))$
(see \eqref{e7}). Since 
$$\theta_1\oplus \theta_2\ = \ ({\rm Id}_{\text{End}(E)}\otimes\Phi)_* (\theta),$$
it follows from \eqref{e14} that
\begin{equation}\label{e15}
\Psi_*(\theta_1\oplus \theta_2) \ = \ 0.
\end{equation}
For any $x\, \in\, D$, we have ${\mathbb T}_x\,=\, ({\mathcal S}_x\oplus {\mathcal L}_x)/
\Phi(V_x)$, where $\Phi$ is the homomorphism in \eqref{e6}. Let
\begin{equation}\label{exi}
\xi_{1,x}\, :\, {\mathcal S}_x \, \longrightarrow\, {\mathbb T}_x\ \ \, \text{ and }\ \ \,
\xi_{2,x}\, :\, {\mathcal L}_x \, \longrightarrow\, {\mathbb T}_x
\end{equation}
be the homomorphisms of fibers over $x$ given by $\xi$ in \eqref{e6}.
Denote the following compositions of homomorphisms
$$
V_x\,\xrightarrow{\,\,\,\Phi_x\,\,\,} \, {\mathcal S}_x\oplus {\mathcal L}_x
\, \longrightarrow\, {\mathcal S}_x \, \xrightarrow{\,\,\,\xi_{1,x}\,\,\,} \, {\mathbb T}_x,\ \text{and}
$$
$$
V_x\,\stackrel{\Phi_x}{\longrightarrow}\, {\mathcal S}_x\oplus {\mathcal L}_x
\, \longrightarrow\, {\mathcal L}_x \, \stackrel{\xi_{2,x}}{\longrightarrow}\, {\mathbb T}_x
$$
by $\alpha^x_1$ and $\alpha^x_2$ respectively. We have
\begin{equation}\label{exj}
\text{Id}_{{\rm End}(E)}\otimes\alpha^x_1
\,=\, \Psi_*(\theta_1)(x)\ \ \ \text{ and }\ \ \ \text{Id}_{{\rm End}(E)}\otimes\alpha^x_2
\,=\, \Psi_*(\theta_2)(x)
\end{equation}
as elements of $\text{End}(E_x)\otimes {\mathbb T}_x$,
where $\Psi_*$ is the homomorphism in \eqref{e14}. Consequently, the proposition follows from
\eqref{e15} and the construction (done in \eqref{e6c}) of the homomorphism $\rho^x$.
\end{proof}

As before, $y$ is a connected component of the fiber, over $x\,\in\, D$, of the map $\varphi$ in \eqref{e11}.
Let $$\theta_{1,y} \,:\, F_y \, \longrightarrow\, F_y\otimes (\varphi^*{\mathcal S})_y\,=\,
F_y\otimes{\mathcal S}_x$$ be the homomorphism given by $\theta_1(x)$. From \eqref{exj} we have
\begin{equation}\label{e-2}
({\rm Id}_{F_y}\otimes \xi_{1,x})\circ \theta_{1,y} \,:\, F_y \, \longrightarrow\, F_y\otimes{\mathbb T}_x\ \ 
\text{ and }\ \ ({\rm Id}_{F_y}\otimes \xi_{2,x})\circ \theta_{2,y} \,:\, F_y \,
\longrightarrow\, F_y\otimes{\mathbb T}_x,
\end{equation}
where $\xi_{1,x}$ and $\xi_{2,x}$ are the homomorphisms in \eqref{exi}, and $\theta_{2,y}$ is
defined in \eqref{e13}.

The following is an immediate consequence of \eqref{e15}.

\begin{corollary}\label{cor1}
The two homomorphisms $({\rm Id}_{F_y}\otimes\xi_{1,x})\circ \theta_{1,y}$ and $({\rm Id}_{F_y}\otimes\xi_{2,x})
\circ \theta_{2,y}$ in \eqref{e-2} from $F_y$ to $F_y\otimes{\mathbb T}_x$ satisfy the following equation:
$$
({\rm Id}_{F_y}\otimes\xi_{1,x})\circ \theta_{1,y} + ({\rm Id}_{F_y}\otimes\xi_{2,x})
\circ \theta_{2,y}\ =\ 0.
$$
\end{corollary}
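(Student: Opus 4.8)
The plan is to derive Corollary~\ref{cor1} directly from the vanishing relation \eqref{e15}, namely $\Psi_*(\theta_1\oplus\theta_2)\,=\,0$, by unwinding what this vanishing means at the level of the fiber $F_y$ over a point $y\,\in\,\varphi^{-1}(x)$. The key observation is that the two homomorphisms appearing in \eqref{e-2} are precisely the components of the map induced on $F_y$ by $\Psi_*(\theta_1\oplus\theta_2)$, so their sum vanishing is just the fiberwise restriction of \eqref{e15}.

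First I would recall from \eqref{exj} that, as elements of $\text{End}(E_x)\otimes\mathbb{T}_x$, we have $\text{Id}_{\text{End}(E)}\otimes\alpha^x_1\,=\,\Psi_*(\theta_1)(x)$ and $\text{Id}_{\text{End}(E)}\otimes\alpha^x_2\,=\,\Psi_*(\theta_2)(x)$, where $\alpha^x_1$ and $\alpha^x_2$ were built as the composites $\xi_{1,x}\circ(\text{projection})\circ\Phi_x$ and $\xi_{2,x}\circ(\text{projection})\circ\Phi_x$ respectively. Because $\Psi$ is $\mathcal{O}_X$--linear and additive on the direct summands, $\Psi_*(\theta_1\oplus\theta_2)\,=\,\Psi_*(\theta_1)+\Psi_*(\theta_2)$, and evaluating at $x$ gives
\[
\Psi_*(\theta_1\oplus\theta_2)(x)\ =\ \text{Id}_{\text{End}(E)}\otimes(\alpha^x_1+\alpha^x_2)\ =\ 0
\]
as a consequence of \eqref{e15}. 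Next I would restrict everything to the subsheaf $F_y\,\subset\,(\varphi^*E)_y\,=\,E_x$. By Lemma~\ref{lem1}(1) (and the discussion preceding Proposition~\ref{prop1}) both $\theta_{1,y}$ and $\theta_{2,y}$ are the honest restrictions of $\theta_1(x)$ and $\theta_2(x)$ to $F_y$, landing in $F_y\otimes\mathcal{S}_x$ and $F_y\otimes\mathcal{L}_x$ respectively. Composing with $\text{Id}_{F_y}\otimes\xi_{1,x}$ and $\text{Id}_{F_y}\otimes\xi_{2,x}$ is exactly the operation of applying $\xi_{1,x}$ and $\xi_{2,x}$ on the twisting factors, which reproduces the fiberwise action of $\alpha^x_1$ and $\alpha^x_2$ on $F_y$. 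Hence
\[
(\text{Id}_{F_y}\otimes\xi_{1,x})\circ\theta_{1,y}+(\text{Id}_{F_y}\otimes\xi_{2,x})\circ\theta_{2,y}
\]
is precisely the restriction to $F_y$ of $\Psi_*(\theta_1\oplus\theta_2)(x)$ acting on $E_x$, and therefore vanishes.

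I do not expect any serious obstacle here, since the statement is deliberately labelled an \emph{immediate consequence} of \eqref{e15}; the only care needed is bookkeeping. The one point that must be checked carefully is that restricting the global relation $\Psi_*(\theta_1\oplus\theta_2)\,=\,0$ to the fiber over $x$ and then to the subsheaf $F_y$ genuinely commutes with the formation of the composites in \eqref{e-2} — that is, that $\theta_{i,y}$ really are the restrictions of $\theta_i(x)$ rather than some twisted variants, and that the identification $(\varphi^*\mathcal{S})_y\,=\,\mathcal{S}_x$ used throughout is compatible with the maps $\xi_{1,x},\xi_{2,x}$. Once these identifications are made explicit, the corollary follows by reading off the vanishing on the summand $F_y$.
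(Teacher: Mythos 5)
Your proposal is correct and follows exactly the route the paper intends: the paper itself dismisses Corollary~\ref{cor1} as ``an immediate consequence of \eqref{e15}'', and your argument simply unwinds that vanishing fiberwise via \eqref{exj} and the invariance of $F_y$ under $\theta_{1,y}$ and $\theta_{2,y}$. The bookkeeping you supply (additivity of $\Psi_*$, identifying the composites in \eqref{e-2} with the fiberwise action of $\alpha^x_1$ and $\alpha^x_2$ restricted to $F_y$) is precisely what the paper leaves implicit.
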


Note that Proposition \ref{prop1} can be deduced from Corollary \ref{cor1}.

\subsection{Reconstructing $V$--twisted Higgs fields}

Take a holomorphic vector bundle $E$ on $X$ together with a $\mathcal S$--twisted Higgs field
\begin{equation}\label{e16}
\Theta \ \in\ H^0(X,\, \text{End}(E)\otimes{\mathcal S}).
\end{equation}
Let $Y\ \subset\ {\mathcal S}$ be the spectral curve and
\begin{equation}\label{e17}
F \ \longrightarrow\ Y
\end{equation}
the sheaf on $Y$ corresponding to $(E,\, \Theta)$. Let
\begin{equation}\label{e18}
\varphi\ :\ Y \ \longrightarrow\ X
\end{equation}
be the restriction of the natural projection ${\mathcal S}\, \longrightarrow\, X$. So we have
$F\, \subset\, \varphi^*E$. For any $x\, \in\, D$ (see \eqref{e8}), and any connected
component $y$ of $\varphi^{-1}(x)$, we have the homomorphism
\begin{equation}\label{e18a}
({\rm Id}_{F_y}\otimes \xi_{1,x})\circ\Theta_y\ :\ F_y\ \longrightarrow\ F_y\otimes {\mathbb T}_x,
\end{equation}
where $\xi_{1,x}$ is the homomorphism in \eqref{exi}.

Let
\begin{equation}\label{e19}
\Theta' \ \in\ H^0(X,\, \text{End}(E)\otimes{\mathcal L})
\end{equation}
be a $\mathcal L$--twisted Higgs field on $E$. As before,
take any point $x\, \in\, D$ and a connected component $y$ of $\varphi^{-1}(x)$.
If the homomorphism
$$
(\varphi^*\Theta')_y \ :\ E_x \,=\, (\varphi^*E)_y \ \longrightarrow\ 
\varphi^*(E\otimes {\mathcal L})_y \,=\, E_x\otimes {\mathcal L}_x
$$
takes $F_y\, \subset\, E_x$ in \eqref{e17} to $F_y\otimes (\varphi^* {\mathcal L})_y \,=\, F_y\otimes{\mathcal L}_x$,
then $\Theta'(x)$ produces a homomorphism
$$
\Theta'_y \ :\ F_y \ \longrightarrow\ F_y\otimes{\mathcal L}_x,
$$
in which case we have the homomorphism
\begin{equation}\label{e13b}
({\rm Id}_{F_y}\otimes \xi_{2,x})\circ\Theta'_y\ :\ F_y\ \longrightarrow\ F_y\otimes {\mathbb T}_x,
\end{equation}
where $\xi_{2,x}$ is the homomorphism in \eqref{exi}.

The following theorem shows that Lemma \ref{lem1} and Corollary \ref{cor1}
together characterize the $V$--twisted Higgs fields on $E$ in terms of the pair consisting of
a $\mathcal S$--twisted Higgs field and a $\mathcal L$--twisted Higgs field on $E$.

\begin{theorem}\label{thm1}
Take
$$
\Theta \ \in\ H^0(X,\, {\rm End}(E)\otimes{\mathcal S}) \ \ \text{ and }\ \
\Theta' \ \in\ H^0(X,\, {\rm End}(E)\otimes{\mathcal L})
$$
satisfying the following three conditions:
\begin{enumerate}
\item The homomorphism $\varphi^*\Theta' \, :\, \varphi^*E\, \longrightarrow\,
\, (\varphi^* E)\otimes \varphi^* {\mathcal L}$ sends the subsheaf $F$ in \eqref{e17} to $F\otimes 
\varphi^* {\mathcal L}\, \subset\, (\varphi^*E) \otimes \varphi^* {\mathcal L}$.

\item The two homomorphisms $(\varphi^*\Theta)\big\vert_F \, :\, F\, \longrightarrow\,
F\otimes \varphi^* {\mathcal S}$ and $(\varphi^*\Theta')\big\vert_F \, :\, F\, \longrightarrow\,
F\otimes \varphi^* {\mathcal L}$ commute. In other words,
$$
(\varphi^*(\Theta'\otimes{\rm Id}_{\varphi^*\mathcal S})\big\vert_{F\otimes\varphi^*{\mathcal S}}
)\circ (\varphi^*\Theta)\big\vert_F
\ =\ (\varphi^*(\Theta\otimes {\rm Id}_{\varphi^*\mathcal L})\big\vert_{F\otimes\varphi^*{\mathcal L}})
\circ (\varphi^*\Theta')\big\vert_F
$$
as homomorphisms from $F$ to $F\otimes \varphi^* ({\mathcal S}\otimes{\mathcal L})$.

\item For all $x$ and $y$ as above, the homomorphisms $({\rm Id}_{F_y}\otimes \xi_{1,x})\circ\Theta_y$
and $({\rm Id}_{F_y}\otimes \xi_{2,x})\circ\Theta'_y$
(see \eqref{e18a} and \eqref{e13b}) satisfy the equation
$$
({\rm Id}_{F_y}\otimes \xi_{1,x})\circ\Theta_y \, +\, ({\rm Id}_{F_y}\otimes \xi_{2,x})\circ\Theta'_y
\ =\ 0.
$$
\end{enumerate}
Then there is a unique $V$--twisted Higgs field $\theta$ on $E$ such that $\Theta$ and $\Theta'$ are
given by $\theta$ (as in \eqref{e7}).
\end{theorem}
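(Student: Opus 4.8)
The plan is to invert the correspondence $\theta \longmapsto (\theta_1,\,\theta_2)$ described just before the theorem. Since that map was already shown to be injective, the real content is \emph{surjectivity onto the locus cut out by conditions (1)--(3)}: given a pair $(\Theta,\,\Theta')$ satisfying the three conditions, I must produce an honest $V$--twisted Higgs field $\theta$ whose associated pair is exactly $(\Theta,\,\Theta')$, and uniqueness will then be automatic from injectivity.

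First I would assemble the obvious candidate. The section $\Theta\oplus\Theta'\in H^0(X,\,\mathrm{End}(E)\otimes(\mathcal S\oplus\mathcal L))$ is the natural thing to push back through the exact sequence \eqref{e14}. By the exactness displayed there, $\Theta\oplus\Theta'$ lies in the image of $(\mathrm{Id}_{\mathrm{End}(E)}\otimes\Phi)_*$ --- i.e.\ comes from a (necessarily unique) section $\theta\in H^0(X,\,\mathrm{End}(E)\otimes V)$ --- precisely when $\Psi_*(\Theta\oplus\Theta')=0$ in $H^0(X,\,\mathrm{End}(E)\otimes\mathbb T)$. So the first key step is to verify that hypothesis (3) is exactly the statement $\Psi_*(\Theta\oplus\Theta')=0$. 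Evaluating $\Psi_*$ fibrewise over each $x\in D$ and using the computation \eqref{exj}, which identifies $\Psi_*(\Theta)(x)$ and $\Psi_*(\Theta')(x)$ with $\mathrm{Id}_{\mathrm{End}(E)}\otimes\alpha^x_1$ and $\mathrm{Id}_{\mathrm{End}(E)}\otimes\alpha^x_2$, I can rewrite $\Psi_*(\Theta\oplus\Theta')(x)$ as $({\rm Id}_{F_y}\otimes\xi_{1,x})\circ\Theta_y+({\rm Id}_{F_y}\otimes\xi_{2,x})\circ\Theta'_y$ summed over the components $y$ over $x$; condition (3) makes each such term vanish. This yields a unique $\theta$ with $\widehat\rho_1(\theta)=\Theta$ and $\widehat\rho_2(\theta)=\Theta'$, so that $\theta_1=\Theta$ and $\theta_2=\Theta'$ in the notation of \eqref{e7}.

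The remaining step, and the one I expect to be the main obstacle, is to check that this recovered $\theta$ is actually a \emph{Higgs field}, i.e.\ satisfies the integrability condition $\theta\bigwedge\theta=0$. By the reformulation \eqref{e7a} this is equivalent to the commutation relation $(\theta_2\otimes\mathrm{Id}_{\mathcal S})\circ\theta_1=(\theta_1\otimes\mathrm{Id}_{\mathcal L})\circ\theta_2$, i.e.\ that $\Theta$ and $\Theta'$ commute as twisted endomorphisms on $X$ itself. Conditions (1) and (2) give this commutation only after pulling back to $Y$ and restricting to the subsheaf $F$. The difficulty is therefore a \emph{descent} problem: I must argue that a commutation identity verified on $F\subset\varphi^*E$ over the spectral curve $Y$ descends to the identity $(\Theta'\otimes\mathrm{Id}_{\mathcal S})\circ\Theta=(\Theta\otimes\mathrm{Id}_{\mathcal L})\circ\Theta'$ in $H^0(X,\,\mathrm{End}(E)\otimes\mathcal S\otimes\mathcal L)$.

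To carry out this descent I would proceed pointwise on the dense open set $X\setminus D$ (or rather the locus where $\Theta$ is regular semisimple) and then extend by continuity, since $\mathrm{End}(E)\otimes\mathcal S\otimes\mathcal L$ is a vector bundle and a holomorphic section vanishing on a dense open set vanishes identically. Over a generic point $x$ the fibre $E_x$ decomposes as the direct sum of the generalized eigenspaces of $(\Theta)(x)$, and these are exactly the fibres $F_y$ for the points $y\in\varphi^{-1}(x)$; condition (1) says $\Theta'$ preserves this eigenspace decomposition, so both $\Theta$ and $\Theta'$ are block-diagonal in the same decomposition, and condition (2) forces the blocks to commute. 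Hence the two compositions agree at $x$, and by density they agree as global sections, giving \eqref{e7a} and thus $\theta\bigwedge\theta=0$. This confirms $\theta$ is a genuine $V$--twisted Higgs field; combined with the first part it inducing the prescribed pair $(\Theta,\,\Theta')$, and with the injectivity established earlier it is the unique such field, completing the proof.
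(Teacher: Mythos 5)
Your proposal has the same skeleton as the paper's proof: you construct $\theta$ by feeding $\Theta\oplus\Theta'$ into the exact sequence \eqref{e14}, using \eqref{exj} to translate condition (3) into $\Psi_*(\Theta\oplus\Theta')\,=\,0$, you get uniqueness from injectivity, and you reduce the integrability $\theta\bigwedge\theta\,=\,0$ to the commutation identity \eqref{c1}; your route through \eqref{e7a} and the paper's route through \eqref{e20} together with the fact that $\theta$ and $\Theta\oplus\Theta'$ coincide on $X\setminus D$ are the same argument. The one substantive place where you go beyond the paper is that you attempt to \emph{prove} the implication (1)$+$(2) $\Rightarrow$ \eqref{c1}, which the paper simply asserts (``In fact, \eqref{c1} is equivalent to the first two conditions'').

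That added argument, however, has a genuine gap: it requires the locus where $\Theta$ is regular semisimple to be dense in $X$. This locus is the complement of the zero set of the discriminant of the characteristic polynomial of $\Theta$, so it is dense if and only if it is nonempty, i.e., if and only if the characteristic polynomial is square-free (equivalently, the spectral curve $Y$ is generically reduced). Theorem \ref{thm1} carries no such hypothesis: it applies, for instance, to nilpotent $\Theta$, where every fibre of $\varphi$ is non-reduced, no point of $X$ is regular semisimple, and the fibres $F_y$ are neither the generalized eigenspaces nor do they span $E_x$ (for a regular nilpotent $\Theta$ of rank $2$ the image of $F_y$ in $E_x$ is the line ${\rm Im}\,\Theta(x)\,\subsetneq\, E_x$). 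In exactly these cases your pointwise decomposition $E_x\,=\,\bigoplus_y F_y$ is false and the density argument says nothing; note the same spanning issue silently enters your first step, since passing from condition (3) (vanishing on each $F_y$) to $\Psi_*(\Theta\oplus\Theta')(x)\,=\,0$ (vanishing on $E_x$) again needs the images of the $F_y$ to generate $E_x$. To be fair, the paper's own proof is equally silent on both points, so your proposal is not weaker than the published argument; but as a self-contained proof it either needs the standing assumption that $Y$ is generically reduced (which is harmless for the application in Theorem \ref{thm2}, where $X_s$ is assumed integral), or a sheaf-theoretic replacement for the pointwise argument, e.g., working with the $\operatorname{Sym}(\mathcal{S}^*)$--module structure on $E$ rather than with fibres.
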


\begin{proof}
The first two conditions imply that
\begin{equation}\label{c1}
(\Theta'\otimes {\rm Id}_{\mathcal S})\circ\Theta\ =\
(\Theta\otimes {\rm Id}_{\mathcal L})\circ\Theta'
\end{equation}
as homomorphisms from $E$ to $E\otimes\mathcal{S}\otimes\mathcal{L}$. In fact, \eqref{c1}
is equivalent to the first two conditions. From \eqref{c1} it follows that the homomorphism
$$
\Theta\oplus\Theta'\ :\ E\longrightarrow\ E\otimes ({\mathcal S}\oplus{\mathcal L})
$$
satisfies the equation
\begin{equation}\label{e20}
(\Theta\oplus\Theta')\bigwedge (\Theta\oplus\Theta')\ =\ 0.
\end{equation}

The third condition in the theorem implies that $\Psi_*(\Theta\oplus\Theta')\,=\, 0$,
where $\Psi_*$ is the homomorphism in \eqref{e14}. Hence from \eqref{e14} it follows
that there is a section
$$
\theta\, \in\, H^0(X,\, \text{End}(E)\otimes V)
$$
such that $({\rm Id}_{\text{End}(E)}\otimes\Phi)_*(\theta) \, =\, \Theta\oplus \Theta'$. Note
that this condition implies that $\theta$ is unique. Thus
$\Theta$ and $\Theta'$ are given by $\theta$ (as in \eqref{e7}). From \eqref{e20} it
follows that $\theta\bigwedge\theta\,=\, 0$ because $\theta$ and $\Theta\oplus\Theta'$ coincide
on $X\setminus D$.
\end{proof}

Lemma \ref{lem1}, Corollary \ref{cor1} and Theorem \ref{thm1} together give the following:

\begin{corollary}\label{cor2}
Let $E$ be a holomorphic vector bundle on $X$ and
$$
\Theta \ \in\ H^0(X,\, {\rm End}(E)\otimes{\mathcal S})\ \ \ and \ \ \
\Theta' \ \in\ H^0(X,\, {\rm End}(E)\otimes{\mathcal L}).
$$
Then there is a $V$--twisted Higgs field $\theta$ on $E$ such that $\Theta$ and $\Theta'$ are
given by $\theta$ (as in \eqref{e7}) if and only if the following three statements hold:
\begin{enumerate}
\item The homomorphism $\varphi^*\Theta' \, :\, \varphi^*E\, \longrightarrow\,
\, (\varphi^* E)\otimes \varphi^* {\mathcal L}$ sends the subsheaf $F$ in \eqref{e17} to $F\otimes 
\varphi^* {\mathcal L}\, \subset\, (\varphi^*E) \otimes \varphi^* {\mathcal L}$.

\item The two homomorphisms $(\varphi^*\Theta)\big\vert_F \, :\, F\, \longrightarrow\,
F\otimes \varphi^* {\mathcal S}$ and $(\varphi^*\Theta')\big\vert_F \, :\, F\, \longrightarrow\,
F\otimes \varphi^* {\mathcal L}$ commute. In other words,
$$
(\varphi^*(\Theta'\otimes{\rm Id}_{\varphi^*\mathcal S})\big\vert_{F\otimes\varphi^*{\mathcal S}}
)\circ (\varphi^*\Theta)\big\vert_F
\ =\ (\varphi^*(\Theta\otimes {\rm Id}_{\varphi^*\mathcal L})\big\vert_{F\otimes\varphi^*{\mathcal L}})
\circ (\varphi^*\Theta')\big\vert_F
$$
as homomorphisms from $F$ to $F\otimes \varphi^* ({\mathcal S}\otimes{\mathcal L})$.

\item For all $x$ and $y$ as before, the homomorphisms $({\rm Id}_{F_y}\otimes \xi_{1,x})\circ\Theta_y$
and $({\rm Id}_{F_y}\otimes \xi_{2,x})\circ\Theta'_y$
(see \eqref{e18a} and \eqref{e13b}) satisfy the equation
$$
({\rm Id}_{F_y}\otimes \xi_{1,x})\circ\Theta_y \, +\, ({\rm Id}_{F_y}\otimes \xi_{2,x})\circ\Theta'_y
\ =\ 0.
$$
\end{enumerate}
\end{corollary}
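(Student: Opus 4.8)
The plan is to prove this biconditional by assembling the three results already established in this section, one package for each direction. The statement merely repackages Lemma \ref{lem1}, Corollary \ref{cor1} and Theorem \ref{thm1} into a single ``if and only if'', so no new computation should be needed; the work is to check that the hypotheses of those results match the three numbered conditions verbatim, and to sequence the steps so that every object referred to is well defined at the moment it is used.

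For the sufficiency (``if'') direction I would argue as follows. Assume that $\Theta$ and $\Theta'$ satisfy conditions (1), (2) and (3). These are precisely the three hypotheses of Theorem \ref{thm1}. Hence Theorem \ref{thm1} applies directly and produces a (in fact unique) $V$--twisted Higgs field $\theta$ on $E$ such that $\Theta\,=\,\theta_1$ and $\Theta'\,=\,\theta_2$ in the notation of \eqref{e7}. This is exactly the asserted conclusion, so this direction is immediate.

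For the necessity (``only if'') direction I would assume that a $V$--twisted Higgs field $\theta$ on $E$ is given with $\Theta\,=\,\theta_1$ and $\Theta'\,=\,\theta_2$, and then verify the three conditions in turn. First, condition (1) --- that $\varphi^*\Theta'\,=\,\varphi^*\theta_2$ preserves the subsheaf $F\,\subset\,\varphi^*E$ --- is exactly the content of Lemma \ref{lem1}(1). It is important to dispatch this first, because it is what guarantees that the restricted homomorphism $\Theta'_y\,=\,\theta_{2,y}$ of \eqref{e13} is defined, and hence that condition (3) even makes sense. Next, condition (2) --- the commutativity of $(\varphi^*\Theta)\vert_F$ and $(\varphi^*\Theta')\vert_F$ --- is Lemma \ref{lem1}(2). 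Finally, with $\Theta_y\,=\,\theta_{1,y}$ and $\Theta'_y\,=\,\theta_{2,y}$ now in hand, condition (3) is the identity proved in Corollary \ref{cor1}.

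Since neither direction requires anything beyond citing the previously proved statements, I do not anticipate a genuine obstacle here. The one point demanding care is the logical ordering just noted: condition (3) is phrased in terms of $\Theta'_y$, which presupposes condition (1); so in the necessity argument Lemma \ref{lem1}(1) must be invoked \emph{before} Corollary \ref{cor1}, and in the sufficiency argument one relies on Theorem \ref{thm1} having built condition (1) into its hypotheses, so that the reconstruction of $\theta$ is legitimate.
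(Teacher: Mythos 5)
Your proposal is correct and matches the paper's own argument: the paper derives Corollary \ref{cor2} exactly by combining Lemma \ref{lem1} (conditions (1) and (2) in the necessity direction), Corollary \ref{cor1} (condition (3)), and Theorem \ref{thm1} (the sufficiency direction). Your additional remark on the logical ordering --- that Lemma \ref{lem1}(1) must come first so that $\Theta'_y$ is well defined before condition (3) is invoked --- is a point the paper leaves implicit, and it is handled correctly in your write-up.
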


It was noted in the proof of Theorem \ref{thm1} that \eqref{c1}
is equivalent to the first two conditions in Theorem \ref{thm1}. Therefore, Corollary
\ref{cor2} gives the following:

\begin{corollary}\label{cor3}
Let $E$ be a holomorphic vector bundle on $X$ and
$$
\Theta \ \in\ H^0(X,\, {\rm End}(E)\otimes{\mathcal S})\ \ \ and \ \ \
\Theta' \ \in\ H^0(X,\, {\rm End}(E)\otimes{\mathcal L}).
$$
Then there is a $V$--twisted Higgs field $\theta$ on $E$ such that $\Theta$ and $\Theta'$ are
given by $\theta$ (as in \eqref{e7}) if and only if the following two statements hold:
\begin{enumerate}
\item $\Theta$ and $\Theta'$ commute, meaning
$$
(\Theta'\otimes {\rm Id}_{\mathcal S})\circ\Theta\ =\
(\Theta\otimes {\rm Id}_{\mathcal L})\circ\Theta'.
$$

\item For all $x$ and $y$ as before, the homomorphisms $({\rm Id}_{F_y}\otimes \xi_{1,x})\circ\Theta_y$
and $({\rm Id}_{F_y}\otimes \xi_{2,x})\circ\Theta'_y$
(see \eqref{e18a} and \eqref{e13b}) satisfy the equation
$$
({\rm Id}_{F_y}\otimes \xi_{1,x})\circ\Theta_y \, +\, ({\rm Id}_{F_y}\otimes \xi_{2,x})\circ\Theta'_y
\ =\ 0.
$$
\end{enumerate}
\end{corollary}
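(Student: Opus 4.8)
The plan is to obtain Corollary~\ref{cor3} as an immediate consequence of Corollary~\ref{cor2}. The third condition of Corollary~\ref{cor2} is literally condition (2) of Corollary~\ref{cor3}, so the only thing to verify is that the first two conditions of Corollary~\ref{cor2} can be collapsed into the single commutation relation \eqref{c1}, which is condition (1) of Corollary~\ref{cor3}. Thus I would isolate and prove the equivalence
\[
\text{conditions (1), (2) of Corollary~\ref{cor2}} \ \Longleftrightarrow\ (\Theta'\otimes {\rm Id}_{\mathcal S})\circ\Theta = (\Theta\otimes {\rm Id}_{\mathcal L})\circ\Theta' ,
\]
after which the corollary follows by substitution. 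This equivalence was already invoked in the proof of Theorem~\ref{thm1}, so the real content is to record its two directions.

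For the implication \eqref{c1} $\Rightarrow$ (1),(2) I would pull \eqref{c1} back along $\varphi$, obtaining $(\varphi^*\Theta'\otimes{\rm Id})\circ\varphi^*\Theta = (\varphi^*\Theta\otimes{\rm Id})\circ\varphi^*\Theta'$ on $\varphi^*E$. Since $F\subset\varphi^*E$ is the generalized eigenspace subsheaf determined by $\Theta$ (equivalently, by the tautological eigenvalue along $Y$), the elementary linear-algebra fact used in the proof of Lemma~\ref{lem1} (an endomorphism commuting with $A$ preserves each generalized eigenspace of $A$) shows that $\varphi^*\Theta'$ carries $F$ into $F\otimes\varphi^*\mathcal L$, which is condition (1); restricting the pulled-back identity to $F$ then yields condition (2).

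For the reverse implication (1),(2) $\Rightarrow$ \eqref{c1} I would use the module-theoretic form of the spectral correspondence \cite{BNR}: one has $E = \varphi_*F$, and $\Theta$ is the pushforward of multiplication by the tautological section of $\varphi^*\mathcal S$, so the $\mathcal O_Y$-module structure on $E$ is exactly the action of $\Theta$. Condition (1) guarantees that $\varphi^*\Theta'$ restricts to a homomorphism $F\to F\otimes\varphi^*\mathcal L$, and condition (2) says precisely that this restriction is $\mathcal O_Y$-linear, i.e.\ commutes with multiplication by the tautological section. Pushing it forward and using naturality of the adjunction inclusion $E=\varphi_*F\hookrightarrow\varphi_*\varphi^*E$ (which identifies $\varphi_*(\varphi^*\Theta'\big\vert_F)$ with $\Theta'$) produces an $\mathcal O_X$-endomorphism of $E$ equal to $\Theta'$ that commutes with $\Theta$, which is \eqref{c1}.

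The main obstacle is this reverse direction, because one must promote the spectral data living only on the subsheaf $F\subset\varphi^*E$ to a genuine global identity of holomorphic sections on all of $X$. The module-theoretic reformulation is the clean route: it packages conditions (1) and (2) together as a single $\mathcal O_Y$-linearity statement and avoids any appeal to reducedness of $Y$ or to a dense locus of distinct eigenvalues (over which a quicker density argument would succeed but would break down for non-reduced spectral curves, where condition (2) is genuinely needed). With the equivalence in hand, Corollary~\ref{cor3} follows at once from Corollary~\ref{cor2}.
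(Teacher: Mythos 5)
Your top-level route is exactly the paper's: deduce Corollary~\ref{cor3} from Corollary~\ref{cor2} by showing that the single identity \eqref{c1} is equivalent to conditions (1) and (2) of Corollary~\ref{cor2} (the paper does no more than cite the sentence in the proof of Theorem~\ref{thm1} asserting this equivalence), and your proof of the direction \eqref{c1} $\Rightarrow$ (1),(2) is the same generalized--eigenspace argument as Lemma~\ref{lem1}. The problem is your converse implication (1),(2) $\Rightarrow$ \eqref{c1}, which is where the actual content lies, and there the argument breaks.

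You claim that, given condition (1), condition (2) ``says precisely that $(\varphi^*\Theta')\big\vert_F$ is $\mathcal O_Y$--linear, i.e.\ commutes with multiplication by the tautological section.'' That reading makes condition (2) vacuous: $\varphi^*\Theta'$ is automatically $\mathcal O_Y$--linear, being the pullback of an $\mathcal O_X$--linear map, hence so is its restriction to the $\mathcal O_Y$--submodule $F$ once (1) holds, and any $\mathcal O_Y$--linear map $F\to F\otimes\varphi^*\mathcal L$ commutes with multiplication by the tautological section $t$. So your argument, if correct, would prove that condition (1) alone implies \eqref{c1}, and that is false. Take $E=\mathcal O_X^{\oplus 2}$, $\mathcal S=\mathcal L=\mathcal O_X$, $\Theta=\left(\begin{smallmatrix}0&1\\ 0&0\end{smallmatrix}\right)$ and $\Theta'=\left(\begin{smallmatrix}p&0\\ 0&v\end{smallmatrix}\right)$ with constants $p\neq v$. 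Then $\mathcal O_Y=\mathcal O_X[t]/(t^2)$ and $F=\ker\bigl((\varphi^*\Theta-t)^2\bigr)=\{(\alpha,\gamma)\in\mathcal O_Y^{\oplus 2}\,:\, t\gamma=0\}$. Condition (1) holds, and $(\varphi^*\Theta')\big\vert_F$ is $\mathcal O_Y$--linear and commutes with $t$; nevertheless $[\Theta',\Theta]=\left(\begin{smallmatrix}0&p-v\\ 0&0\end{smallmatrix}\right)\neq 0$, and condition (2) indeed fails, since applying the commutator of $(\varphi^*\Theta)\big\vert_F$ and $(\varphi^*\Theta')\big\vert_F$ to $(0,t)\in F$ gives $\bigl((p-v)t,\,0\bigr)\neq 0$. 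The point you miss is that on the generalized--eigenspace subsheaf $F$ of \eqref{e17} the map $(\varphi^*\Theta)\big\vert_F$ is \emph{not} multiplication by $t$: it is $t\cdot\mathrm{Id}$ plus a nilpotent part, and condition (2) is precisely commutation with that nilpotent part --- this is also internally inconsistent with your own (correct) remark that condition (2) carries genuine content when $Y$ is non-reduced. A second, related error: $E=\varphi_*F$ is false for this $F$ (in the example above $\varphi_*F$ has rank $3$, not $2$); the BNR sheaf whose pushforward is $E$ is a \emph{quotient} of $\varphi^*E$ via the counit $\varphi^*\varphi_*(\cdot)\to(\cdot)$, not the subsheaf $F$, and its description as a rank--one torsionfree sheaf requires integrality of the spectral curve, which Corollary~\ref{cor3} does not assume. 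So the reverse implication still needs a proof: for reduced $Y$ the density argument you set aside does work, and the non-reduced case is exactly what remains open in your proposal.
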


\section{Spectral construction}\label{subsection:spectral}

We can use Theorem \ref{thm1} and Corollaries \ref{cor2} and \ref{cor3} to describe a spectral construction 
for $V$--twisted Higgs bundles. Take a rank $2$ holomorphic vector bundle $V$ on $X$, and let $\mathcal S$ and 
$\mathcal L$ be holomorphic line bundles on $X$ constructed as in Section \ref{section:decomposition}, so 
that \eqref{e6} holds, in other words, there exists a torsion sheaf $\mathbb{T}$ supported on a reduced 
divisor $D$ such that
$$0\, \longrightarrow\, V \,\longrightarrow \,\mathcal{S}\oplus \mathcal{L} \, 
\longrightarrow\, \mathbb{T} \,\longrightarrow\, 0$$
is a short exact sequence.

Associate to each $V$--twisted rank $r$ Higgs bundle $(E,\, \theta)$ on $X$ the $\mathcal S$--twisted and
$\mathcal L$--twisted Higgs fields $\theta_1\,\in\, H^0(X,\, \operatorname{End}(E)\otimes {\mathcal S})$
and $\theta_2\,\in\, H^0(X,\, \operatorname{End}(E)\otimes{\mathcal L})$ respectively induced by $\theta$
(see \eqref{e7}). Following \cite{BNR}, let $s_i\,=\,\operatorname{tr}(\bigwedge^i \theta_1)
\,\in\, H^0(X,\, \mathcal{S}^{\otimes i})$, $1\, \leq\, i\, \leq\, r\,=\, {\rm rank}(E)$
be the coefficients of the characteristic polynomial of $\theta_1$, and let
$\mathcal{S} \, \supset \, X_s\, \longrightarrow \, X$ be the spectral curve associated to
$s\,=\,(s_i)_{i=1}^r$, defined by the characteristic polynomial
$$t^r-\phi^*s_1 t^{r-1}+\ldots+(-1)^r\phi^*s_r\ =\ 0,$$
where $\phi\, :\, \mathcal{S} \, \longrightarrow X$ is the natural projection and $t$ is the tautological
section of $\phi^*\mathcal{S}$ on the total space of $\mathcal{S}$.

\begin{theorem}\label{thm2}
Suppose that the curve $X_s$ is integral. Then there is a natural bijective correspondence
of the following type:
$$\left\{\begin{array}{c}
\text{Rank 1 torsion free }\varphi^*\mathcal{L}\text{--twisted Higgs}\\
\text{sheaves }(F,\,\theta_2')\text{ on } X_s, \text{where}\\
\theta_2'\, : \, F \, \longrightarrow\, F \otimes \varphi^*\mathcal{L}\\
\text{is such that}\\
\theta_{2,y}'\,=\,-\rho^x(y) \operatorname{id} \ \, \forall\,\, y \,\in\, \varphi^{-1}(D).
\end{array}
\right\} \quad \stackrel{1\,:\,1}{\longleftrightarrow} \quad \left\{\begin{array}{c}
V\text{--twisted Higgs bundles}\\
(E,\,\theta) \text{ with}\\
$$\theta\, :\, E\, \longrightarrow \, E \otimes V\\
\text{ such that the induced map }\\
\theta_1 \, : \, E\, \longrightarrow\, E\otimes \mathcal{S}\\
\text{has characteristic polynomial }$s$.
\end{array}
\right\}$$
\end{theorem}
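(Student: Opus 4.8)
The plan is to reduce the statement to the classical Beauville--Narasimhan--Ramanan correspondence \cite{BNR} for the $\mathcal S$--twisted field $\theta_1$, combined with the characterization of $V$--twisted Higgs fields obtained in Theorem \ref{thm1} and its corollaries. Since $X_s$ is assumed integral, the spectral correspondence for $\mathcal S$--twisted Higgs bundles of rank $r$ is a bijection between $\mathcal S$--twisted Higgs bundles $(E,\,\theta_1)$ with characteristic polynomial $s$ and rank one torsionfree sheaves $F$ on $X_s$, under which $E\,=\,\varphi_*F$ and $\theta_1$ is the pushforward of multiplication by the tautological section $t\big\vert_{X_s}\,\in\, H^0(X_s,\,\varphi^*\mathcal S)$. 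Note that $\varphi$ is finite flat of degree $r$, so $\varphi_*F$ is torsionfree of rank $r$ on the smooth curve $X$, hence locally free; thus $E$ is genuinely a vector bundle. I will construct the two maps of the claimed bijection and show that they are mutually inverse.

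For the map from a $V$--twisted Higgs bundle to spectral data, start with $(E,\,\theta)$ whose induced field $\theta_1$ has characteristic polynomial $s$, and let $F$ be the rank one torsionfree sheaf on $X_s$ furnished by \cite{BNR}. By \eqref{e7a} the fields $\theta_1$ and $\theta_2$ commute, so Lemma \ref{lem1}(1) shows that $\varphi^*\theta_2$ preserves the subsheaf $F\,\subset\,\varphi^*E$ of \eqref{e12}, producing a $\varphi^*\mathcal L$--twisted Higgs field $\theta_2'\,:\,F\,\longrightarrow\, F\otimes\varphi^*\mathcal L$. Proposition \ref{prop1} shows that the induced endomorphism has a single generalized eigenvalue $-\rho^x(y)$ at each $y\,\in\,\varphi^{-1}(D)$; since $X_s$ is integral we have $\mathcal{E}nd(F)\,=\,\mathcal O_{X_s}$, so $\theta_2'$ acts by a scalar on every fiber, whence this single eigenvalue is genuinely the scalar value $\theta_{2,y}'\,=\,-\rho^x(y)\,\operatorname{id}$. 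This gives a pair $(F,\,\theta_2')$ lying in the left-hand set.

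For the reverse map, start with $(F,\,\theta_2')$ satisfying the eigenvalue condition. Put $E\,=\,\varphi_*F$, which is locally free of rank $r$, and let $\Theta\,=\,\theta_1$ be the $\mathcal S$--twisted field given by multiplication by $t\big\vert_{X_s}$, so that its characteristic polynomial is $s$. Using the projection formula $\varphi_*(F\otimes\varphi^*\mathcal L)\,=\,(\varphi_*F)\otimes\mathcal L\,=\,E\otimes\mathcal L$, the pushforward of $\theta_2'$ defines $\Theta'\,=\,\varphi_*\theta_2'\,\in\, H^0(X,\,\operatorname{End}(E)\otimes\mathcal L)$, and by adjunction $\varphi^*\Theta'$ preserves $F\,\subset\,\varphi^*E$ with $(\varphi^*\Theta')\big\vert_F\,=\,\theta_2'$; this is condition (1) of Corollary \ref{cor2}, and it makes $\Theta_y'$ well defined. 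I would then verify the remaining hypotheses: the fields $\Theta$ and $\Theta'$ commute because both arise from $\mathcal O_{X_s}$--linear multiplication operators on the rank one sheaf $F$ and hence commute before, and therefore after, pushforward; and the fiberwise equation $(\operatorname{Id}_{F_y}\otimes\xi_{1,x})\circ\Theta_y+(\operatorname{Id}_{F_y}\otimes\xi_{2,x})\circ\Theta_y'\,=\,0$ is precisely the imposed condition $\theta_{2,y}'\,=\,-\rho^x(y)\,\operatorname{id}$, once $\rho^x$ is read off from $\xi_{1,x}$ and $\xi_{2,x}$ as in \eqref{e6b1}--\eqref{e6c}. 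Corollary \ref{cor2} then yields a unique $V$--twisted Higgs field $\theta$ on $E$ inducing $\Theta$ and $\Theta'$, giving the desired $(E,\,\theta)$ whose induced $\theta_1$ has characteristic polynomial $s$.

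Finally I would check that the two constructions are mutually inverse: starting from $(E,\,\theta)$, passing to $(F,\,\theta_2')$ and back recovers $E\,=\,\varphi_*F$ and $\theta_1\,=\,\Theta$ by the bijectivity of \cite{BNR}, while $\theta_2\,=\,\Theta'$ follows from the uniqueness clause in Theorem \ref{thm1} together with the adjunction identity $\varphi_*\big((\varphi^*\theta_2)\big\vert_F\big)\,=\,\theta_2$; conversely, $F$ is recovered as the spectral sheaf of $(\varphi_*F,\,\Theta)$ and $\theta_2'$ as $(\varphi^*\Theta')\big\vert_F$. The main obstacle is the bookkeeping over the divisor $D$: one must show that the single-generalized-eigenvalue statement of Proposition \ref{prop1} is equivalent, for integral $X_s$, both to the genuine scalar identity $\theta_{2,y}'\,=\,-\rho^x(y)\,\operatorname{id}$ and to condition (3) of Corollary \ref{cor2}, even at non-reduced points of $\varphi^{-1}(x)$ where the Beauville--Narasimhan--Ramanan fiber $F_y$ may have dimension greater than one. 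Integrality of $X_s$, which forces $\mathcal{E}nd(F)\,=\,\mathcal O_{X_s}$, is exactly what makes these formulations coincide.
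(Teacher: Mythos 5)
You follow the paper's route exactly: the BNR pushforward correspondence for $(E,\,\theta_1)$, Lemma \ref{lem1} and Proposition \ref{prop1} for the forward direction, Theorem \ref{thm1} and Corollary \ref{cor2} for the reverse direction, and the uniqueness clause for mutual inverseness; in fact your write-up is more explicit than the paper's about the two maps being inverse to each other. The problem is the step that you yourself flag as the ``main obstacle'': the claimed equivalence, over $D$, between Proposition \ref{prop1}'s single-eigenvalue statement, the scalar identity $\theta'_{2,y}\,=\,-\rho^x(y)\operatorname{id}$, and condition (3) of Corollary \ref{cor2}. Your justification of this equivalence does not hold up.

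Two of the assertions you lean on are false. First, $\mathcal{E}nd(F)\,=\,\mathcal{O}_{X_s}$ can fail for a rank $1$ torsionfree sheaf on an integral curve: if $X_s$ has a node at $p$ and $F\,=\,m_p$ is the ideal sheaf of the node, then $\mathcal{E}nd(F)$ is the direct image of the structure sheaf of the normalization, which strictly contains $\mathcal{O}_{X_s}$. Second, and more importantly, even when $F$ is a line bundle and $\theta_2'$ is multiplication by a section $g$ of $\varphi^*\mathcal{L}$, such a map need not act as a scalar on the fibers $F_y$ entering condition (3): those fibers are taken over connected components $y$ of the scheme-theoretic fiber $\varphi^{-1}(x)$, which are non-reduced wherever $\varphi$ ramifies. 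Concretely, if $X_s$ is smooth at $y$ with local coordinate $u$ and $\varphi$ is $z\,=\,u^2$, then $F_y\,\cong\, k[u]/(u^2)$ and $g$ acts as $g(0)+g'(0)N$ with $N$ nilpotent and nonzero. Unwinding condition (3) (the image of $\Phi_x$ is the graph of $\rho^x$, so $\xi_{1,x}\,=\,-\xi_{2,x}\circ\rho^x$) shows that it says $\Theta'_y\,=\,\pm({\rm Id}\otimes\rho^x)\circ\Theta_y$ on all of $F_y$, where $\Theta_y$ is the tautological action, which is itself non-scalar at a ramification point; since $\rho^x$ is an isomorphism, this prescribes the nonzero $1$-jet $g'(0)$, a constraint invisible to the pointwise scalar condition in the left-hand box of Theorem \ref{thm2}. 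So when $X_s$ ramifies over a point of $D$ (integrality does not preclude this), your two maps are not mutually inverse: for instance, if the spectral curve meets the zero section of $\mathcal S$ over $D$, the pair $(\mathcal{O}_{X_s},\,\theta_2'\,=\,0)$ satisfies the left-hand condition, yet no $V$-twisted $\theta$ can induce it, because condition (3) forces $\theta_2'$ to be a nonzero nilpotent on $F_y$. You should be aware that the paper's own proof passes over exactly this point with the single unsupported sentence ``As $\theta_2'$ is a map of rank 1 torsionfree sheaves, and $X_s$ is assumed to be integral, this is equivalent\dots'', so you have faithfully reproduced its structure; but the gap is genuine, and closing it requires either an additional hypothesis (e.g.\ $X_s$ unramified over $D$ and $F$ locally free over $\varphi^{-1}(D)$, which makes every $F_y$ one-dimensional) or a reformulation of the left-hand eigenvalue condition as a scheme-theoretic identity along $\varphi^{-1}(D)$.
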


\begin{proof}
Let $\theta \, : \, E\longrightarrow E\otimes V$ be a $V$--twisted Higgs field. Denote by
$\theta_1$ (respectively, $\theta_2$) the associated $\mathcal{S}$--twisted (respectively,
$\mathcal{L}$--twisted) Higgs field on $E$. Then $\theta_1$ induces a $\operatorname{Sym}
(\mathcal{S}^*)$--module structure on $E$, which factors through a $\varphi_*\mathcal{O}_{X_s} \,\cong
\, \operatorname{Sym}(\mathcal{S}^*)/\mathcal{I}$--module structure, where $\mathcal{I}$ is the ideal
generated by the characteristic polynomial of $\theta_1$ in $\operatorname{Sym}(\mathcal{S}^*)$. As a
consequence, if $X_s$ is integral then there exists a rank 1 torsionfree sheaf $F$ on $X_s$, satisfying
the condition $E\, =\,\varphi_*F$, such that the map $\theta_1$ is induced by the pushforward of the
$\mathcal{O}_{X_s}$--module structure on $F$.

Then, the twisted Higgs fields $\theta_1$ and $\theta_2$ commute if and only if the map $\theta_2 \,
: \, E\, \longrightarrow\, E \otimes \mathcal{L}$ is a map of $\operatorname{Sym}(\mathcal{S}^*)$--modules
in addition to being a map of $\mathcal{O}_X$--modules. Since $E\,=\,\varphi_*F$ is the pushforward of an
$\operatorname{Sym}(\mathcal{S}^*)$--module supported on $X_s$, we conclude that $\theta_1$ and $\theta_2$
commute if and only if $\theta_2\,=\,\varphi_*\theta_2'$ for some homomorphism
$$\theta_2' \, : \, F \, \longrightarrow \, F\otimes \varphi^*\mathcal{S}.$$
By construction, this map coincides with the restriction of $\varphi^*\theta_2$ to $F$ described by 
Theorem \ref{thm1}(1). By Corollary \ref{cor3} and Proposition \ref{prop1}, the two
Higgs commuting fields $\theta_1$ 
and $\theta_2$ are induced by a $V$--twisted Higgs field if and only if the unique eigenvalue of 
$\theta_{2,y}'$ is $-\rho^x(y)$ for each $y\,\in\, \varphi^{-1}(D)$. As $\theta_2'$ is a map of rank 1
torsionfree sheaves, and $X_s$ is assumed to be integral, this is equivalent to the condition
that $\theta_{2,y}'\,=\,-\rho^x(y) 
\operatorname{id}$. This completes the proof.
\end{proof}

Recall that a Higgs bundle $(E,\,\theta)$ is called \emph{stable} (respectively \emph{semistable}) if and only
if for each $0\, \neq\, E'\,\subsetneq\, E$ such that $\theta(E')\,\subseteq\, E'\otimes V$,
$$
\frac{\text{rank}(E')}{\text{degree(E')}}\, < \,\frac{\text{rank}(E)}{\text{degree}(E)}
\ \ \, \left(\text{respectively, }\, \frac{\text{rank}(E')}{\text{degree(E')}}\,
\leq \,\frac{\text{rank}(E)}{\text{degree}(E)}\right).$$

\begin{proposition}
Let $(E,\, \theta)$ be a $V$--twisted Higgs bundles, and let $(E,\, \theta_1)$ be its associated
$\mathcal{S}$--twisted Higgs bundle. If the spectral curve of $\theta_1$ is integral, then
$(E,\, \theta)$ does not admit any nontrivial $\theta$--invariant subbunde. Consequently,
$(E,\, \theta)$ is stable.
\end{proposition}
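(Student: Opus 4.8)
The plan is to reduce the assertion about $V$--twisted invariant subbundles to the corresponding assertion for the associated $\mathcal{S}$--twisted Higgs bundle $(E,\,\theta_1)$, and then to read off the absence of invariant subbundles from the spectral description established in Theorem \ref{thm2}.

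First I would observe that every $\theta$--invariant subbundle is automatically $\theta_1$--invariant. Recall from \eqref{e7} that $\theta_1\,=\,({\rm Id}_E\otimes\widehat{\rho}_1)\circ\theta$, where $\widehat{\rho}_1\,:\,V\,\to\,\mathcal S$ is the projection in \eqref{e6d}. Hence if $0\,\neq\,E'\,\subseteq\,E$ satisfies $\theta(E')\,\subseteq\,E'\otimes V$, then applying ${\rm Id}_{E'}\otimes\widehat{\rho}_1$ gives $\theta_1(E')\,\subseteq\,E'\otimes\mathcal S$. Thus it suffices to show that $(E,\,\theta_1)$ has no proper nonzero $\theta_1$--invariant subbundle when its spectral curve $X_s$ is integral.

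For this I would invoke the spectral correspondence. Since $X_s$ is integral, by the argument in the proof of Theorem \ref{thm2} we may write $E\,=\,\varphi_*F$ with $F$ a rank one torsionfree sheaf on $X_s$, the action of $\theta_1$ being the pushforward of the $\mathcal{O}_{X_s}$--module structure on $F$. Because $\varphi$ is finite, and so affine, the functor $\varphi_*$ identifies $\mathcal{O}_{X_s}$--submodules of $F$ with the $\theta_1$--invariant (equivalently, $\varphi_*\mathcal{O}_{X_s}$--submodule) subsheaves of $E$. A $\theta_1$--invariant subbundle $E'\,\subseteq\,E$ is such a subsheaf, so $E'\,=\,\varphi_*F'$ for some $\mathcal{O}_{X_s}$--submodule $F'\,\subseteq\,F$, and $E'\,\neq\,0$ forces $F'\,\neq\,0$. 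The decisive point is that $X_s$ is integral, so its sheaf of rational functions is a field and any nonzero subsheaf of the rank one sheaf $F$ again has rank one; hence $F/F'$ is torsion. Consequently $\text{rank}(E')\,=\,\deg(\varphi)\cdot\text{rank}_{X_s}(F')\,=\,\deg(\varphi)\,=\,\text{rank}(E)$. A subbundle of full rank is all of $E$, so $E'\,=\,E$, contradicting $E'\,\subsetneq\,E$. Combined with the first step, this shows $(E,\,\theta)$ has no nontrivial $\theta$--invariant subbundle.

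Finally, for stability I would note that any $\theta$--invariant subsheaf may be replaced by its saturation, which is again $\theta$--invariant and whose slope is no smaller; therefore the stability inequality need only be tested on $\theta$--invariant subbundles. As there are no proper nonzero ones, the stability condition holds vacuously and $(E,\,\theta)$ is stable. The step I expect to require the most care is the middle one: one must justify that $\theta_1$--invariant subbundles correspond exactly to $\mathcal{O}_{X_s}$--submodules of $F$ via the affine map $\varphi$, and that it is integrality of $X_s$—not merely reducedness or irreducibility in isolation—that forces every nonzero such submodule to have full generic rank one, so that pushforward preserves the rank; were $X_s$ to split into components, invariant subbundles could arise from the individual components and the conclusion would fail.
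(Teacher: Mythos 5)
Your proof is correct and follows essentially the same route as the paper: reduce $\theta$--invariance to $\theta_1$--invariance, identify $\theta_1$--invariant subsheaves with $\mathcal{O}_{X_s}$--submodules of $F$ via the finite map $\varphi$, and use integrality of $X_s$ to force any nonzero submodule to have rank one, so that its pushforward has full rank $r$ --- a contradiction. The only additions are details the paper leaves implicit, namely the explicit rank computation $\mathrm{rank}(\varphi_*F')=\deg(\varphi)$ and the saturation argument reducing the stability test to invariant subbundles, both of which are sound.
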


\begin{proof}
Let $\varphi \, : \, X_s\, \longrightarrow\, X$ be the spectral curve of $(E,\,\theta_1)$. Let
$0\,\neq\, E'\,\subsetneq\, E$ be a subbundle of the $V$--twisted Higgs bundle $(E,\theta)$. Denote by
$(F,\, \theta_2')$ the associated rank 1 torsionfree Higgs sheaf on $X_s$, given by the correspondence in
Theorem \ref{thm2}. It is clear by construction that $E'$ is preserved by $\theta$ if and only if
it is preserved simultaneously by $\theta_1$ and $\theta_2$. Now, $E'$ is preserved by $\theta_1$ if and only
if it is a sub--$\operatorname{Sym}(\mathcal{S}^*)$--module of $E$, so in that case $E'$ must be the
pushforward of some nontrivial subsheaf $F'$ of $F$. Since $X_s$ is integral, and the rank of $E'$ is less
than $r$, this is not possible.
\end{proof}


\end{document}